\newcommand{\GF}{{\mathbb F}}
\newcommand{\R}{{\mathbb R}}
\newcommand{\wt}{{\rm wt}}
\DeclareMathOperator{\Harm}{Harm}
\newcommand{\ra}{{\rangle}}
\newcommand{\la}{{\langle}}
\newtheorem{Thm}{Theorem}[section]
\newtheorem{Lem}[Thm]{Lemma}
\newtheorem{Prop}[Thm]{Proposition}
\theoremstyle{definition}
\newtheorem{Def}[Thm]{Definition}
\newtheorem{Rem}[Thm]{Remark}
\newtheorem{Problem}[Thm]{Problem}
\begin{document}

\title{The support designs of the triply even binary codes of length $48$}

\author{
Tsuyoshi Miezaki
\thanks{Faculty of Education, University of the Ryukyus, Okinawa  
903-0213, Japan; 
E-mail: miezaki@edu.u-ryukyu.ac.jp
(Corresponding author)
}
and 
Hiroyuki Nakasora
\thanks{Institute for Promotion of Higher Education, Kobe Gakuin University, Kobe
651-2180, Japan;
E-mail: nakasora@ge.kobegakuin.ac.jp
}
}

\date{}

\maketitle

\begin{abstract}

In this paper, we present {examples} of codes
all of whose weight classes support $1$-designs, with duals whose classes include 
two that support $2$-designs.  
We can find {these examples} in the triply even binary codes of length $48$, 
which have been classified by Betsumiya and Munemasa.

\end{abstract}


\noindent
{\small\bfseries Key Words and Phrases.}
support designs, triply even binary codes, harmonic weight enumerator.\\ \vspace{-0.15in}

\noindent
2010 {\it Mathematics Subject Classification}.
Primary 05B05;
Secondary 94B05, 20B25.\\ \quad

\setcounter{section}{+0}
\section{Introduction}

Let $D_{w}$ be the support design of a binary code $C$ for a weight $w$ and 
\begin{align*}
\delta(C)&:=\max\{t\in \mathbb{N}\mid \forall w, 
D_{w} \mbox{ is a } t\mbox{-design}\},\\ 
s(C)&:=\max\{t\in \mathbb{N}\mid \exists w \mbox{ s.t.~} 
D_{w} \mbox{ is a } t\mbox{-design}\}.
\end{align*}

Note that $\delta(C) \leq s(C)$. 
In our previous papers \cite{extremal design H-M-N,extremal design2 M-N}, we considered the following problems. 

\begin{Problem}\label{problem:1}
Find an upper bound of $s(C)$.
\end{Problem}

\begin{Problem}\label{problem:2}
Does the case where $\delta(C) < s(C)$ occur?
\end{Problem}
For Problem \ref{problem:1}, there is no known example of a $6$-design obtained from a code.
For Problem \ref{problem:2}, if $C$ is an extremal Type II code, 
there is no known example of $\delta(C)<s(C)$.
In this paper, we consider the possible occurrence of $\delta(C)<s(C)$ in general.

We can see an example of a case where $\delta(C) < s(C)$ in Dillion and Schatz \cite{Dillion-Schatz}.
An Hadamard difference set over an elementary abelian $2$-group gives a binary $[2^{2m},2m+2]$ code
with the weight enumerator 
\[
1+2^{2m} x^{2^{2m-1}-2^{m-1}}+ (2^{2m+1}-2) x^{2^{2m-1}}+2^{2m}x^{2^{2m-1}+2^{m-1}}+x^{2m}.\]


It has support $2$-designs with weight $2^{2m-1} \pm 2^{m-1}$ and support $3$-design with middle weight $2^{2m-1}$.


A triply even binary code is a linear code in which the weight of
every codeword is divisible by $8$; such codes have previously been 
classified up to length 48 by Betsumiya and Munemasa \cite{Betsumiya-Munemasa 2012, BD}. 
Here we study the support designs of triply even binary codes of length $48$ 
and present the complete list of triply even binary codes of length $48$ 
which have the support $1$-designs obtained from the Assmus--Mattson theorem. 
It is interesting to note that 
some of such codes have a support $2$-design of a different type than the example of Dillion and Schatz

We present the following theorem.

\begin{Thm}\label{thm:main thorem}
There are $46$ triply even binary codes of length $48$ for which every weight class
forms {the} support $1$-design obtained from the Assmus--Mattson theorem, 
and for some $41$ of these, the dual code has the support $2$-designs amongst some,  
but not all, of its weight classes.
\end{Thm}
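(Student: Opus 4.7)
The plan is a finite case analysis based on the Betsumiya--Munemasa classification \cite{Betsumiya-Munemasa 2012, BD} of triply even binary codes of length $48$. For each code $C$ on that list I would first compute the dual $C^\perp$ and its weight enumerator, and then apply the Assmus--Mattson theorem with $t=1$: the hypothesis reduces to the numerical condition that the number of nonzero weights of $C^\perp$ lying in $\{1,\dots,47\}$ is at most $d(C)-1$. Filtering the full classification through this inequality should leave exactly $46$ codes, establishing the first half of the statement.

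For the second half, I would fix attention on each of these $46$ codes $C$ and test the $2$-design property class by class in $C^\perp$. The natural tool is Bachoc's harmonic weight enumerator: $D_w(C^\perp)$ forms a $2$-design if and only if the coefficient of $x^{48-w}y^{w}$ in $W_{C^\perp,f}(x,y)$ vanishes for every harmonic polynomial $f$ of degree $2$. Equivalently, and more directly for a machine check, one fixes a pair $\{i,j\}\subset\{1,\dots,48\}$ and counts the codewords of $C^\perp$ of weight $w$ whose support contains $\{i,j\}$; the $2$-design condition is that this count depends only on $w$, not on $\{i,j\}$. Running this test for every $w$ and every pair records, for each of the $46$ candidate codes, the exact set of dual weight classes that support $2$-designs.

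The final count is a matter of tallying. Among the $46$ codes, one isolates those for which this set is simultaneously nonempty and a proper subset of the nontrivial weights of $C^\perp$; the theorem asserts this happens for $41$ of them, with $5$ falling into the degenerate cases (either no dual weight class supports a $2$-design, or every one of them does, i.e.\ $\delta(C^\perp)\ge 2$). The main obstacle here is computational rather than conceptual: some of these codes have dimension as large as $24$, so $|C^\perp|$ can be very large, and one must process all $\binom{48}{2}=1128$ pairs across every weight class. Particular care is needed to separate the ``some but not all'' situation from the two degenerate alternatives, and the cleanest presentation of the result is a table that exhibits the $41$ positive instances alongside the $5$ exceptions explicitly.
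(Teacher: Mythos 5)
Your overall plan (a finite verification over the Betsumiya--Munemasa classification) is the same as the paper's, but two of your concrete steps would not work as stated. First, you apply the Assmus--Mattson hypothesis in the wrong direction: requiring ``the number of nonzero weights of $C^{\perp}$ in $\{1,\dots,47\}$ to be at most $d(C)-1$'' is the condition for the pair $(C,C^{\perp})$, and it fails for every code on the list, since $d(C)=16$ while the dual (of dimension $35$ to $46$, not $24$) has far more than $15$ nonzero weights below $48$. The paper applies the theorem with the roles interchanged: $d(C^{\perp})=4$ and $C$ has exactly $3=4-1$ nonzero weights $\leq 47$ (namely $16,24,32$), which is what yields $t=1$ for the $42$ codes of type (A); the four codes of type (B) work similarly with $d(C^{\perp})=2$ and the single weight $24$.

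Second, your verification of the $2$-design property by enumerating, for each weight class of $C^{\perp}$ and each of the $\binom{48}{2}$ pairs, the codewords containing that pair is not feasible: the duals have up to $2^{46}$ codewords and their middle weight classes are astronomically large. The paper avoids this entirely. For the positive direction it computes $W_{C^{\perp},f_2}$ not from $C^{\perp}$ but from $C$ via Bachoc's MacWilliams-type identity (Theorem \ref{thm: Bachoc iden.}): since $C$ has only the weights $16,24,32$ and all its classes are $1$-designs, $Z_{C,f_2}$ has just two free parameters $a,b$, the transform forces $b'=-2a'$ because $C^{\perp}$ has no weight-$2$ words, and the coefficient of $x^{42}y^{6}$ in the resulting expression vanishes identically --- so $D_6^{\perp}$ is a $2$-design for all $41$ codes at once, with no enumeration of $C^{\perp}$. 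For the negative direction the paper does not test pairs at all: it invokes the divisibility constraint that any $2$-$(48,k,\lambda)$ design has block number divisible by $47$ (Lemma \ref{lem: divisible 47}), and checks the block counts (obtainable from the MacWilliams identity) modulo $47$. You would need both of these ideas --- the transfer of the degree-$2$ harmonic computation to the small code, and the arithmetic obstruction for ruling classes out --- to make your plan executable.
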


This paper is organized as follows. 
First, in Section $2$, we give the background material and terminology.
In Section $3$, we review the concept of 
 harmonic weight enumerators, which are used in the proof of the 
main result. 
In Section $4$, 
we present the details of the main result; namely, 
we study the support designs of the triply even binary codes of length $48$. 
Finally, in Section $5$, 
we give some remarks. 


\section{Background material and terminology}\label{sec:terminology}

Let $\GF_q$ be the finite field of $q$ elements. 
A binary linear code $C$ of length $n$ is a subspace of $\GF_2^n$. 
For ${\bf x}:=(x_1,\ldots,x_n)\in C$, the weight of {\bf x} is 
defined as follows:
$\wt({\bf x}):=\sharp\{i\mid x_i\neq 0\}. $
The minimum distance of a code $C$ is 
$\min\{\wt({\bf x})\mid {\bf x}\in C, {\bf x}\neq {\bf 0}\}$. 
A linear code of length $n$, dimension $k$, and 
minimum distance $d$ is called an $[n,k,d]$ code (or $[n,k]$ code for short).

The weight distribution of a code $C$ 
is the sequence $\{A_{i}\mid i=0,1, \dots, n \}$, where $A_{i}$ is the number of codewords of weight $i$. 
The polynomial
$$W_C(x ,y) = \sum^{n}_{i=0} A_{i} x^{n-i} y^{i}$$
is called the weight enumerator of $C$.
The weight enumerator of a code $C$ and its dual $C^{\perp}$ are related. 
The following theorem, due to MacWilliams, is called the MacWilliams identity.
\begin{Thm}[\cite{mac}]\label{thm: macwilliams iden.} 
Let $W_C(x ,y)$ be the weight enumerator of an $[n,k]$ code $C$ over $\GF_{q}$ and let $W_{C^\perp}(x ,y)$ be 
the weight enumerator of the dual code $C^\perp$. Then
$$W_{C^\perp} (x ,y)= q^{-k} W_C(x+(q-1)y,x-y).$$
\end{Thm}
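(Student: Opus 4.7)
My plan is to combine the Betsumiya--Munemasa classification with a harmonic weight enumerator analysis, proceeding in four stages.

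First, I would take as input the complete list of triply even binary codes of length $48$ from \cite{Betsumiya-Munemasa 2012, BD}. For each code $C$ on the list, I compute its weight enumerator $W_{C}(x,y)$ and, via the MacWilliams identity, the weight enumerator $W_{C^{\perp}}(x,y)$ of the dual.

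Second, I would apply the Assmus--Mattson theorem with $t = 1$ to each code. This requires the number of nonzero weights of $C^{\perp}$ in the range $1 \le w \le n - 1 = 47$ to be at most $d(C) - 1$, where $d(C)$ denotes the minimum distance of $C$. Since the codes are triply even, the weights of $C$ are confined to $\{0,8,16,24,32,40,48\}$, so the hypothesis check reduces to counting nonzero dual weights and comparing with $d(C) - 1$. Collecting those codes for which every weight class of $C$ forms a $1$-design should yield precisely the $46$ codes claimed in the theorem.

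Third, for each of these $46$ codes, I would examine the weight classes of the dual $C^{\perp}$ for support $2$-designs. The natural tool is the harmonic weight enumerator $W_{C,f}(x,y)$ attached to a discrete harmonic function $f$ of degree $k$ (to be recalled in Section $3$): the weight-$w$ codewords of $C$ form a $k$-design if and only if the coefficient of $x^{n-w}y^{w}$ in $W_{C,f}$ vanishes for every such $f$ of degree at most $k$. Using Bachoc's MacWilliams-type identity for harmonic weight enumerators, I can transfer $W_{C,f}$ to $W_{C^{\perp},f}$ and read off, weight by weight, which classes of $C^{\perp}$ support a $2$-design.

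Finally, I would isolate those codes among the $46$ whose dual supports a $2$-design on at least one weight class but not on every nonzero one; the target count is $41$. The principal obstacle is the bookkeeping: every one of the $46$ codes must be processed individually, and for each code a basis of degree-$\le 2$ discrete harmonics must be tested against every weight class of $C^{\perp}$. Exploiting automorphism groups and coordinate equivalences among the classified codes should streamline, but not eliminate, the case-by-case verification.
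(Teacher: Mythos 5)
Your proposal does not address the statement at hand. The statement to be proved is the MacWilliams identity itself: for an $[n,k]$ code $C$ over $\GF_q$, the relation $W_{C^\perp}(x,y)=q^{-k}W_C(x+(q-1)y,\,x-y)$. What you have written instead is a strategy for the paper's main classification result about support designs of triply even binary codes of length $48$ (Theorem \ref{thm:main thorem} / Theorem \ref{thm: 1 and 2-design}). Worse, your very first step invokes ``the MacWilliams identity'' as a known tool to pass from $W_C$ to $W_{C^\perp}$, so as an argument for the identity itself your proposal is circular: it assumes the conclusion in order to begin. Nothing in the four stages you describe establishes, or even engages with, the claimed polynomial transformation law.

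A genuine proof of this statement runs along entirely different lines. The standard route is via characters and Poisson summation: fix a nontrivial additive character $\chi$ of $\GF_q$, set $f(\mathbf{v})=x^{n-\wt(\mathbf{v})}y^{\wt(\mathbf{v})}$ for $\mathbf{v}\in\GF_q^n$, and compute its transform
\[
\hat f(\mathbf{u})=\sum_{\mathbf{v}\in\GF_q^n}\chi(\la \mathbf{u},\mathbf{v}\ra)f(\mathbf{v})
=\bigl(x+(q-1)y\bigr)^{n-\wt(\mathbf{u})}(x-y)^{\wt(\mathbf{u})},
\]
which follows coordinatewise from the orthogonality relations for $\chi$. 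The identity $\sum_{\mathbf{u}\in C^\perp}f(\mathbf{u})=|C|^{-1}\sum_{\mathbf{c}\in C}\hat f(\mathbf{c})$ (Poisson summation over the subgroup $C$) then yields $W_{C^\perp}(x,y)=q^{-k}W_C(x+(q-1)y,\,x-y)$. The paper itself cites this as a classical result of MacWilliams and gives no proof, but whatever proof is supplied must be of this character-theoretic (or equivalent combinatorial) nature; a computational survey of length-$48$ triply even codes is not relevant to it.
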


A $t$-$(v,k,{\lambda})$ design (or $t$-design for short) is a pair 
$\mathcal{D}=(X,\mathcal{B})$, where $X$ is a set of points of 
cardinality $v$ and $\mathcal{B}$ a collection of $k$-element subsets
of $X$ called blocks with the property that any $t$ points are 
contained in precisely $\lambda$ blocks.

The support of a nonzero vector ${\bf x}:=(x_{1}, \dots, x_{n})$, $x_{i} \in \GF_{q} = \{ 0,1, \dots, q-1 \}$ is 
the set of indices of its nonzero coordinates: ${\rm supp} ({\bf x}) = \{ i \mid x_{i} \neq 0 \}$\index{$supp (x)$}. 
The support design of a code of length $n$ for a given nonzero weight $w$ is the design 
with $n$ points of coordinate indices and blocks the supports of all codewords of weight $w$.


\section{The harmonic weight enumerators}\label{sec:weight enumerators}

In this section, we review the concept of the harmonic weight enumerators.

A striking generalization of the MacWilliams identity 
was obtained by Bachoc \cite{Bachoc}, 
who gave the concept of  harmonic weight enumerators and 
 a generalization of the MacWilliams identity. 
The harmonic weight enumerators have many applications; 
particularly, the relations between coding theory and 
design theory are reinterpreted and {strengthened} by the harmonic weight 
enumerators \cite {Bachoc,Bannai-Koike-Shinohara-Tagami}. 
For the reader's convenience, we quote 
the definitions and properties of discrete harmonic functions from \cite{Bachoc,Delsarte}. 

Let $\Omega=\{1, 2,\ldots,n\}$ be a finite set (which will be the set of coordinates of the code) and 
let $X$ be the set of its subsets, while, for all $k= 0,1, \ldots, n$, $X_{k}$ is the set of its $k$-subsets.
We denote by $\R X$, $\R X_k$ the real vector spaces spanned by the elements of $X$, $X_{k}$, respectively. 
An element of $\R X_k$ is denoted by
$$f=\sum_{z\in X_k}f(z)z$$
and is identified with the real-valued function on $X_{k}$ given by 
$z \mapsto f(z)$. 

Such an element $f\in \R X_k$ can be extended to an element $\widetilde{f}\in \R X$ by setting, for all $u \in X$,
$$\widetilde{f}(u)=\sum_{z\in X_k, z\subset u}f(z).$$
If an element $g \in \R X$ is equal to some $\widetilde{f}$, for $f \in \R X_{k}$, we say that $g$ has degree $k$. 
The differentiation $\gamma$ is the operator defined by linearity from 
$$\gamma(z) =\sum_{y\in X_{k-1},y\subset z}y$$
for all $z\in X_k$ and for all $k=0,1, \ldots n$, and $\Harm_{k}$ is the kernel of $\gamma$:
$$\Harm_k =\ker(\gamma|_{\R X_k}).$$

\begin{Thm}[{{\cite[Theorem 7]{Delsarte}}}]\label{thm:design}
A set $\mathcal{B} \subset X_{m}$, where $m \leq n$, of blocks is a $t$-design 
if and only if $\sum_{b\in \mathcal{B}}\widetilde{f}(b)=0$ 
for all $f\in \Harm_k$, $1\leq k\leq t$. 
\end{Thm}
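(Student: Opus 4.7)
The plan is to translate the sum $\sum_{b \in \mathcal{B}} \widetilde{f}(b)$ into an inner product involving the intersection numbers of $\mathcal{B}$, reformulate the $t$-design property in those same terms, and then close the equivalence by a short induction on $k$ driven by the operator $\gamma$ and its adjoint.

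First I would introduce the intersection function $\lambda_k \in \R X_k$ defined by $\lambda_k(y) := \sharp\{b \in \mathcal{B} : y \subset b\}$ for $y \in X_k$. Swapping the order of summation yields
\[
\sum_{b \in \mathcal{B}} \widetilde{f}(b) \;=\; \sum_{b \in \mathcal{B}} \sum_{\substack{z \in X_k \\ z \subset b}} f(z) \;=\; \sum_{z \in X_k} f(z)\,\lambda_k(z) \;=\; \langle f, \lambda_k \rangle
\]
for every $f \in \R X_k$. The classical characterization of a $t$-design asserts that $\mathcal{B}$ is a $t$-design if and only if $\lambda_k$ is a constant function on $X_k$ for each $0 \leq k \leq t$. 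So the theorem reduces to the statement that, for every $1 \leq k \leq t$, $\lambda_k \perp \Harm_k$ if and only if $\lambda_k$ is constant.

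Next I would collect the linear-algebraic facts about $\gamma$ on which the proof rests. Summing $\gamma(f)(y) = 0$ over $y \in X_{k-1}$ and exchanging sums shows $\sum_{z \in X_k} f(z) = 0$ for every $f \in \Harm_k$ with $k \geq 1$; equivalently, the constant function on $X_k$ lies in $\Harm_k^{\perp}$. Since $\Harm_k = \ker(\gamma|_{\R X_k})$, one has $\Harm_k^{\perp} = \operatorname{im}(\gamma^{*})$ for the adjoint $\gamma^{*}$, and in particular $\gamma|_{\Harm_k^{\perp}}$ is injective. The forward direction of the theorem is then immediate from the identity $\langle f,\lambda_k\rangle = \lambda_k \sum_z f(z) = 0$ for constant $\lambda_k$ and $f \in \Harm_k$. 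For the converse I would induct on $k$: the base case $\lambda_0 = |\mathcal{B}|$ is automatically constant; a direct double count gives the recurrence $\gamma(\lambda_k) = (m-k+1)\,\lambda_{k-1}$, so the inductive hypothesis makes $\gamma(\lambda_k)$ constant on $X_{k-1}$. Because $\gamma$ sends a constant on $X_k$ to $(n-k+1)$ times a constant on $X_{k-1}$, there is a unique constant $c \in \R X_k$ with $\gamma(c) = \gamma(\lambda_k)$. Both $\lambda_k$ and $c$ lie in $\Harm_k^{\perp}$, so injectivity of $\gamma|_{\Harm_k^{\perp}}$ forces $\lambda_k = c$, and in particular $\lambda_t$ is constant, as required.

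The main obstacle I anticipate is not the induction itself but the careful bookkeeping around $\gamma$ and $\gamma^{*}$: verifying the recurrence $\gamma(\lambda_k) = (m-k+1)\,\lambda_{k-1}$, together with the structural facts that constants belong to $\Harm_k^{\perp}$, that $\Harm_k^{\perp} = \operatorname{im}(\gamma^{*})$, and that $\gamma|_{\Harm_k^{\perp}}$ is injective. Each assertion is elementary on its own, but they are precisely where the combinatorics of $X_k$ interact with the harmonic decomposition, and the binomial factors $(m-k+1)$ and $(n-k+1)$ must be tracked correctly before the induction closes.
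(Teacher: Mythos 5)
Your proposal is correct. Note, however, that the paper does not prove this statement at all: it is quoted verbatim from Delsarte (\cite[Theorem 7]{Delsarte}) as a known tool, so there is no in-paper argument to compare against. What you have written is a complete, self-contained reconstruction in the spirit of Delsarte's original linear-algebraic treatment. The key identifications all check out: $\sum_{b\in\mathcal{B}}\widetilde{f}(b)=\langle f,\lambda_k\rangle$ by exchanging sums; $\sum_{y\in X_{k-1}}\gamma(f)(y)=k\sum_{z\in X_k}f(z)$, which places the constants in $\Harm_k^{\perp}$ for $k\geq 1$ and gives the forward implication; the double count $\gamma(\lambda_k)=(m-k+1)\lambda_{k-1}$ (each block $b\supset y$ contributes $m-k+1$ sets $z$ with $y\subset z\subset b$); and the injectivity of $\gamma$ on $\Harm_k^{\perp}=\ker(\gamma|_{\R X_k})^{\perp}$, which closes the induction since $\lambda_k-c\in\Harm_k\cap\Harm_k^{\perp}=\{0\}$. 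The one point worth making explicit in a final write-up is the standard equivalence you invoke at the outset --- that $\lambda_t$ constant forces $\lambda_k$ constant for all $k\leq t$ (via $\lambda_k(K)\binom{m-k}{t-k}=\sum_{T\supset K}\lambda_t(T)$) --- and the harmless nonvanishing of $n-k+1$ needed to solve $\gamma(c)=\gamma(\lambda_k)$ for a constant $c$; both are routine and do not affect the validity of the argument.
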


In \cite{Bachoc}, the harmonic weight enumerator associated with a binary linear code $C$ was defined as follows. 
\begin{Def}
Let $C$ be a binary code of length $n$ and let $f\in\Harm_{k}$. 
The harmonic weight enumerator associated with $C$ and $f$ is

$$W_{C,f}(x,y)=\sum_{{\bf c}\in C}\widetilde{f}({\bf c})x^{n-\wt({\bf c})}y^{\wt({\bf c})},$$
where ${\bf c}$ is equated with its support.
\end{Def}

Bachoc proved the following MacWilliams-type identity. 
\begin{Thm}[{{\cite[Theorem 2.1]{Bachoc}}}] \label{thm: Bachoc iden.} 
Let $W_{C,f}(x,y)$ be 
the harmonic weight enumerator associated with the code $C$ 
and the harmonic function $f$ of degree $k$. Then 
$$W_{C,f}(x,y)= (xy)^{k} Z_{C,f}(x,y),$$
where $Z_{C,f}$ is a homogeneous polynomial of degree $n-2k$ and satisfies
$$Z_{C^{\perp},f}(x,y)= (-1)^{k} \frac{2^{n/2}}{|C|} Z_{C,f} \left( \frac{x+y}{\sqrt{2}}, \frac{x-y}{\sqrt{2}} \right).$$
\end{Thm}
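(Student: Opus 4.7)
\medskip

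\noindent\textbf{Proof proposal for Theorem~\ref{thm:main thorem}.}

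The plan is to run through the Betsumiya--Munemasa classification \cite{Betsumiya-Munemasa 2012, BD} of triply even binary codes of length $48$ and, for each representative $C$, decide (i)~whether every weight class of $C$ carries a support $1$-design obtained from the Assmus--Mattson theorem, and (ii)~whether the dual $C^\perp$ has at least one but not all weight classes supporting $2$-designs. The first task is essentially combinatorial: compute the weight distribution of $C$ (and of $C^\perp$ via Theorem~\ref{thm: macwilliams iden.}), determine the minimum distance $d^\perp$ of $C^\perp$, count the number of nonzero weights $w$ of $C$ with $w\le n - t$, and verify the Assmus--Mattson hypothesis for $t=1$. Sifting the list this way should produce exactly $46$ survivors; this is the computational step that underlies the first assertion.

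For the second assertion I would use the harmonic weight enumerator machinery of Section~\ref{sec:weight enumerators}. By Theorem~\ref{thm:design}, the weight-$w$ support set of $C^\perp$ is a $2$-design if and only if the coefficient of $x^{n-w}y^w$ in $W_{C^\perp,f}(x,y)$ vanishes for every $f\in\mathrm{Harm}_k$ with $k=1,2$. For each of the $46$ codes I would pick bases of $\mathrm{Harm}_1$ and $\mathrm{Harm}_2$, form $W_{C,f}(x,y)$ directly from the codewords, and then invoke Theorem~\ref{thm: Bachoc iden.} to transfer the computation to $W_{C^\perp,f}(x,y)$ without having to enumerate $C^\perp$ itself; the factorization $W_{C,f}=(xy)^k Z_{C,f}$ plus the MacWilliams-type transform gives $W_{C^\perp,f}$ exactly. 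Reading off the coefficients then tells us precisely which dual weights $w$ support a $2$-design.

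The final count should proceed by tabulating, for each of the $46$ codes, the set $S(C^\perp)\subseteq\{w : A_w^\perp\neq 0\}$ of dual weights that support a $2$-design, and checking whether $\emptyset\neq S(C^\perp)\subsetneq\{w : A_w^\perp\neq 0\}$. I expect $41$ codes to satisfy this strict double inclusion, while the remaining $5$ fall into the degenerate cases $S(C^\perp)=\emptyset$ or $S(C^\perp)$ equal to the full set of nonzero dual weights. Packaging these data into a table (code, dual weight distribution, designated weights carrying $2$-designs) will constitute the explicit witness to the theorem.

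The main obstacle is not any single hard step but the bookkeeping: one needs a uniform, reproducible way to (a) parse Betsumiya--Munemasa's generator matrices, (b) evaluate $\widetilde f({\bf c})$ for a chosen harmonic basis on tens of thousands of codewords, and (c)~correctly interpret the harmonic coefficients via Theorem~\ref{thm:design} to distinguish ``supports a $2$-design'' from ``does not.'' The delicate conceptual point is ensuring that the $1$-designs claimed in the first half really are the ones \emph{produced by the Assmus--Mattson theorem} (so the hypotheses on $d^\perp$ and on the weight spectrum must be checked, not only the design property itself), which is what distinguishes the count $46$ from a possibly larger number of codes whose weight classes happen to support $1$-designs for incidental reasons.
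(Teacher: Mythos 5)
There is a fundamental mismatch here: the statement you were asked to prove is Theorem~\ref{thm: Bachoc iden.}, Bachoc's MacWilliams-type identity for harmonic weight enumerators, which the paper quotes from \cite[Theorem 2.1]{Bachoc} without proof. Your proposal instead outlines a proof of Theorem~\ref{thm:main thorem} (the classification of the $46$ triply even codes and their support designs), and --- worse for the assigned task --- it explicitly \emph{invokes} Theorem~\ref{thm: Bachoc iden.} as a tool (``invoke Theorem~\ref{thm: Bachoc iden.} to transfer the computation to $W_{C^\perp,f}$''). So as a proof of the stated theorem the proposal is circular and contains none of the required content.

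A proof of the actual statement would have to establish two things. First, the divisibility $W_{C,f}(x,y)=(xy)^k Z_{C,f}(x,y)$: this comes from the fact that for $f\in\Harm_k$ the extension $\widetilde f(u)$ vanishes when $|u|<k$, and a complementation/harmonicity argument handles the divisibility by $x^k$ at the other end of the weight range. Second, the transformation law for $Z_{C^{\perp},f}$: the standard route is Poisson summation (the discrete Fourier transform on $\GF_2^n$) applied to the function $v\mapsto \widetilde f(\supp(v))\,x^{n-\wt(v)}y^{\wt(v)}$, combined with the fact that harmonic functions of degree $k$ are eigenfunctions of the relevant Krawtchouk-type transform with eigenvalue carrying the sign $(-1)^k$; this is exactly Bachoc's argument. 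None of these steps appears in your writeup. (Separately, even read as a proposal for Theorem~\ref{thm:main thorem}, your sketch diverges from the paper: the paper does not compute $W_{C^\perp,f}$ for a basis of $\Harm_2$ code by code, but instead uses a divisibility obstruction, Lemma~\ref{lem: divisible 47}, to rule out $2$-designs via block counts modulo $47$, and uses the vanishing of $W_{C,f_2}$ at only three weights together with the minimum-distance constraint $d^\perp=4$ to pin down $Z_{C^{\perp},f_2}$ up to a scalar --- a much lighter computation than the one you describe.)
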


\section{The support designs of triply even binary codes of length $48$}\label{sec:main}

In this section, we study the support designs of triply even binary codes of length $48$. 


The following theorem is due to Assmus and Mattson \cite{assmus-mattson}. It is one of the 
most important theorems in coding theory and design theory. 
We state it here for the binary case.

\begin{Thm}[\cite{assmus-mattson}] \label{thm:assmus-mattson}
Let $C$ be an $[n,k,d]$ linear code over $\GF_{2}$ and $C^{\bot}$ be the $[n,n-k,d^{\bot}]$ dual code. 
Suppose that for some integer 
$t,0<t<d$, there are at most $d-t$ non-zero weights $w$ in $C^{\bot}$ such that $w \leq n-t$. Then:
\begin{enumerate}
\item[$(1)$] the support design for any weight $u$, $d \leq u \leq n$, in $C$ is a $t$-design;
\item[$(2)$] the support design for any weight $w$, $d^{\bot} \leq w \leq n-t$, 
in $C^{\bot}$ is a $t$-design.
\end{enumerate}
\end{Thm}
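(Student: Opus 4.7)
The plan is to verify Theorem~\ref{thm:main thorem} by a direct computational census applied to the list of triply even binary codes of length $48$ classified by Betsumiya and Munemasa~\cite{Betsumiya-Munemasa 2012, BD}. The statement comprises two tallies: first, the number of such codes $C$ whose weight classes all form support $1$-designs via Theorem~\ref{thm:assmus-mattson}, which should be $46$; second, the number of those for which $C^\perp$ has at least one, but not all, of its nonzero weight classes supporting $2$-designs, which should be $41$. I would attack each in order.

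For the first tally I would loop over the Betsumiya--Munemasa classification, and for each $C$ with parameters $[48,k,d]$ compute the weight distribution of the dual via the MacWilliams identity (Theorem~\ref{thm: macwilliams iden.}). I would then test the Assmus--Mattson hypothesis at $t=1$: the number of nonzero weights $w$ in $C^\perp$ with $w\le 47$ must be at most $d-1$. Codes passing this test yield support $1$-designs on every nonzero weight class of $C$ and, by part~$(2)$, on every nonzero $C^\perp$-class of weight $\le 47$. The claim is that exactly $46$ codes in the classification satisfy this hypothesis.

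For the second tally I would, for each of the $46$ selected codes, test each nonzero weight $w$ of $C^\perp$ for the $2$-design property using the harmonic-enumerator criterion of Theorem~\ref{thm:design}: the $w$-class supports a $2$-design if and only if, for every $f$ in a basis of $\Harm_1\oplus\Harm_2$, the coefficient of $x^{48-w}y^w$ in $W_{C^\perp,f}(x,y)$ vanishes. The $\Harm_1$ conditions are already guaranteed by the $1$-design property delivered by Assmus--Mattson for $w\le 47$, and can be checked separately for $w=48$, so the genuine work is the $\Harm_2$ check. To compute $W_{C^\perp,f}$ I would use Bachoc's identity (Theorem~\ref{thm: Bachoc iden.}): form $W_{C,f}(x,y)=\sum_{\mathbf{c}\in C}\widetilde{f}(\mathbf{c})\,x^{48-\wt(\mathbf{c})}y^{\wt(\mathbf{c})}$ by summing directly over the codewords of $C$, extract $Z_{C,f}$, apply the inversion to obtain $Z_{C^\perp,f}$, and read off the coefficients. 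A code $C$ is then placed in the final count when the set of $C^\perp$-weights producing $2$-designs is a nonempty proper subset of all nonzero $C^\perp$-weights.

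The principal obstacle is computational scale. The full space $\Harm_2$ for $n=48$ has dimension $\binom{48}{2}-48=1080$, so a naive scan requires $1080$ harmonic weight enumerator calculations per code. I would reduce this by exploiting $G:=\Aut(C)$, which acts on $\Harm_2$ and leaves every support design invariant: it suffices to verify the vanishing condition for $f$ ranging over a basis of the $G$-fixed subspace $\Harm_2^G$, whose dimension is typically far smaller, and in fact one may replace any $f$ by its $G$-average without loss. Organizing the outputs in a table---one row per code, one column per nonzero $C^\perp$-weight, recording which cells are $2$-designs---makes the final census of ``some but not all'' transparent, confirming the counts $46$ and $41$ and simultaneously supplying the explicit examples promised in the abstract.
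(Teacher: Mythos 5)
Your proposal does not address the statement you were asked to prove. The statement is Theorem~\ref{thm:assmus-mattson}, the Assmus--Mattson theorem itself: a general assertion that for an arbitrary $[n,k,d]$ binary code $C$, if for some $t$ with $0<t<d$ there are at most $d-t$ nonzero weights $w\le n-t$ in $C^{\perp}$, then the support designs of $C$ (for weights $d\le u\le n$) and of $C^{\perp}$ (for weights $d^{\perp}\le w\le n-t$) are $t$-designs. What you have written is instead a computational plan for verifying Theorem~\ref{thm:main thorem}, the paper's census of triply even codes of length $48$ --- and that plan \emph{invokes} Theorem~\ref{thm:assmus-mattson} as a black box in its very first step. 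Nothing in your text establishes the implication from the hypothesis on the dual weights to the $t$-design conclusion; you have assumed exactly what was to be shown. (For the record, the paper also gives no proof: it quotes the theorem from Assmus and Mattson's 1969 paper. But a blind proof attempt for this statement would have to supply the classical argument --- e.g., via the weight distributions of the codes obtained by puncturing and shortening $C$ on a $t$-set, counting that the number of unknown dual weights is small enough to force the distribution to be independent of the chosen $t$-set, or alternatively via the harmonic weight enumerator machinery of Theorem~\ref{thm: Bachoc iden.} by showing $W_{C,f}$ and $W_{C^{\perp},f}$ vanish in the required degrees for all $f\in\Harm_j$, $1\le j\le t$.)

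Beyond the mismatch of target, the content you did write is a reasonable sketch of the paper's Section~4 computation, though even there it diverges from what the paper actually does: the paper rules out $2$-designs not by scanning a basis of $\Harm_2$ but by the divisibility obstruction of Lemma~\ref{lem: divisible 47} (block counts must be multiples of $47$), and it establishes the $2$-design property of $D_6^{\perp}$ by a short closed-form harmonic weight enumerator argument using only that $W_{C,f_2}$ is supported on weights $16$, $24$, $32$ with a symmetry $a x^{32}y^{16}+bx^{24}y^{24}+ax^{16}y^{32}$, plus the vanishing of the weight-$2$ coefficient of the dual. But none of this is relevant to the statement at hand: as a proof of Theorem~\ref{thm:assmus-mattson}, the proposal is circular and must be rewritten from scratch.
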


If a $t$-design ($t>0$) is obtained from some linear code by this theorem, 
the code is said to be applicable to the Assmus--Mattson theorem.

In \cite{BD, Betsumiya-Munemasa 2012}, triply even binary codes of length 48 are classified {and} 
each code is described {using the notation} $\la \text{Dimension, Code Id, [ Generators ]}\ra$. 
In this paper, we use the {notation} $\la \text{Dimension, [Code Id]} \ra$. 
The following proposition is from a computational search of the
database \cite{BD}. 

\begin{Prop}\label{prop: A 42 and B 4}
If a triply even binary code of length $48$ is applicable to the Assmus--Mattson theorem,
then it is one of the following with $t=1$: 

\begin{enumerate}
\item[$(A)$] 
$\la 7,[144]\ra$, \
$\la8,[129,130,131,132,133]\ra$, \\
$\la9,[59,60,61,62,63,64, 65,66,67,68,69,1109,1712,1714,1716,1960]\ra$, \\
$\la10,[16,17,18,19,20, 21,22,549,550,554,1001,1245,1246,1247]\ra$, \\
$\la11,[6,7,154,520]\ra$, \
$\la12,[3]\ra$, \
$\la13,[1]\ra$. 

\item[$(B)$] 

$\la2,[1]\ra,
\la3,[4]\ra,
\la4,[7]\ra,
\la5,[12]\ra$
\end{enumerate}
\end{Prop}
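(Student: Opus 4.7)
The plan is a finite computational verification resting on the complete classification of triply even binary codes of length $48$ by Betsumiya and Munemasa \cite{BD, Betsumiya-Munemasa 2012}. Since every such code is explicitly listed in their database, the proposition reduces to checking the hypothesis of the Assmus--Mattson theorem (Theorem~\ref{thm:assmus-mattson}) code by code, exactly as the text already indicates.

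Concretely, for each triply even code $C$ of length $48$ in the database I would compute the weight distribution of $C$ and then use the MacWilliams identity (Theorem~\ref{thm: macwilliams iden.}) to produce the weight distribution of $C^\perp$. From these I extract the minimum distance $d$ (necessarily a multiple of $8$) and, for each integer $t$ with $1\le t<d$, count the number $N(t)$ of nonzero weights $w$ of $C^\perp$ satisfying $w\le 48-t$. The code $C$ is applicable to the Assmus--Mattson theorem with parameter $t$ exactly when $N(t)\le d-t$. Filtering the database by this criterion should return precisely the $46$ codes listed in (A) and (B), and examining every admissible $t$ for each such $C$ should show that only $t=1$ ever yields a valid application.

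Two structural observations help organize the search and act as sanity checks. First, since $C$ is triply even every codeword has even weight, so $\mathbf{1}\in C^\perp$ and the weight $48$ always appears in the dual distribution; however, the cut-off $w\le 48-t$ automatically excludes it for every $t\ge 1$, so it never contributes to $N(t)$. Second, triply even codes are doubly even and therefore self-orthogonal, so $C\subseteq C^\perp$, which forces every nonzero weight of $C$ to appear among the weights of $C^\perp$; this gives a lower bound on $N(t)$ that immediately disqualifies many low-dimensional candidates. The main obstacle is computational rather than conceptual: for small-dimensional $C$ the dual has dimension up to $46$ or $47$ and cannot be enumerated directly, so the MacWilliams transform must be applied symbolically to $W_C(x,y)$. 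Once this is implemented, the inequality $N(t)\le d-t$ is routine to verify, and the enumeration of (A) and (B) emerges as the direct output of the search.
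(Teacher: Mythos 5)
Your overall framework---a finite search over the Betsumiya--Munemasa database, testing the Assmus--Mattson hypothesis code by code---matches the paper's, but the test you propose is applied in the wrong direction and would reject every code in the list. You take $C$ to be the triply even code, let $d$ be its minimum distance, and count the nonzero weights of $C^\perp$ that are at most $48-t$ against the bound $d-t$. For the codes in (A) one has $d=16$, while $C^\perp$ has dimension between $35$ and $41$ and therefore far more than $15$ distinct nonzero weights $\le 47$ (indeed, as you yourself note, $C\subseteq C^\perp$ and $\mathbf{1}\in C^\perp$, so $C^\perp$ realizes essentially all even weights from $4$ up to $44$). Hence $N(t)>d-t$ for every admissible $t$, and your filter returns the empty list; the same happens in case (B).

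The key idea of the paper's proof is to apply Theorem \ref{thm:assmus-mattson} with the roles of $C$ and $C^\perp$ \emph{interchanged}: the code fed into the theorem is $C^\perp$, whose minimum distance is $4$ in case (A) and $2$ in case (B), and the weights one counts are those of the triply even code $C$ itself, which are only $16,24,32,48$ (resp.\ $24,48$). Since $48>48-t$ for $t\ge 1$, there are exactly $3$ (resp.\ $1$) relevant nonzero weights, which meets the bound $4-t$ (resp.\ $2-t$) precisely at $t=1$ and fails for $t\ge 2$. A complete search for ``applicability'' must test both orientations of the theorem; it is the reversed orientation that produces the list in the Proposition, while the direct orientation you describe never succeeds for these codes. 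Note also that the relevant minimum distance in the working orientation is $d^\perp\in\{2,4\}$, not a multiple of $8$ as you assert for $d$. With that correction your computational plan coincides with the paper's proof, which likewise relies on a numerical check that the remaining codes in the database are not applicable in either orientation.
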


\begin{proof}
First, for all the codes in the database \cite{BD}, 
we obtained  the weight distribution and minimum weight of the dual. 

Let $C$ be a triply even binary code of length $48$ in $(A)$.
Then $C$ has weight $0, 16, 24, 32, 48$ and $C^{\perp}$ has the minimum weight $4$. 
We apply  Theorem \ref{thm:assmus-mattson} by interchanging $C$ and $C^{\perp}$.
Since there are $4-t=3$ non-zero weights $w$ with $w \leq 48-t$ in $C$, 
we can take $t=1$.

In the case $(B)$, these codes have weight $0, 24, 48$ and 
the dual codes have the minimum weight 2. 
By Theorem \ref{thm:assmus-mattson},
since there are $2-t=1$ non-zero weights $w$ with $w \leq 48-t$, 
we can take $t=1$.

For the other cases except for the cases $(A)$ and $(B)$, 
we have checked numerically that the codes are not applicable to the Assmus--Mattson theorem.

\end{proof}

The following Lemma is easily seen.

\begin{Lem}[{{\cite[Page 3, Proposition 1.4]{CL}}}]\label{lem: divisible 47}

\begin{enumerate}
\item[$(1)$] Let $\mathcal{B}_{1}$ be the block set of a $2$-$(48,k,\lambda_{2})$ design 
for $2 \leq k \leq 46$. 
Then $|\mathcal{B}_{1}|$ is a multiple of $47$.

\item[$(2)$] Let $\mathcal{B}_{2}$ be the block set of a $3$-$(48,6,\lambda_{3})$ design. 
Then $|\mathcal{B}_{2}|$ is divisible by $47 \cdot 23\cdot 4$ 
and $\lambda_{3}$ is divisible by $5$.
\end{enumerate}
\end{Lem}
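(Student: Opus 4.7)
The plan is to invoke the standard counting identities for $t$-designs: for any $t$-$(v,k,\lambda_t)$ design and any $0\le i\le t$, each $i$-subset of points lies in exactly
\[
\lambda_i \;=\; \lambda_t\,\frac{\binom{v-i}{t-i}}{\binom{k-i}{t-i}}
\]
blocks, and each $\lambda_i$ is forced to be a nonnegative integer. Both assertions will follow by tracking which primes in these numerators cannot be absorbed by the denominators over the restricted range of $k$.

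For part (1) I would focus on $\lambda_1 = 47\lambda_2/(k-1)$ together with $|\mathcal{B}_1| = \lambda_0 = 48\lambda_1/k$. The hypothesis $2\le k\le 46$ makes both $k$ and $k-1$ coprime to the prime $47$, so integrality of $\lambda_1$ forces $(k-1)\mid\lambda_2$, which immediately gives $47\mid\lambda_1$; integrality of $\lambda_0$ combined with $\gcd(47,k)=1$ then promotes this to $47\mid |\mathcal{B}_1|$.

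For part (2) I would specialise the same formulas to $v=48$, $k=6$, $t=3$, yielding $\lambda_2 = 23\lambda_3/2$ and $\lambda_1 = 47\cdot 23\,\lambda_3/10$. Integrality of $\lambda_2$ forces $2\mid\lambda_3$, while integrality of $\lambda_1$ combined with $\gcd(5,47\cdot 23)=1$ forces $5\mid\lambda_3$; the latter divisibility is already the claimed conclusion about $\lambda_3$. Substituting $10\mid\lambda_3$ into
\[
|\mathcal{B}_2| \;=\; \lambda_3\,\binom{48}{3}\Big/\binom{6}{3} \;=\; \lambda_3\cdot\frac{8\cdot 23\cdot 47}{10}
\]
then shows that $|\mathcal{B}_2|$ is a multiple of $8\cdot 23\cdot 47$, comfortably containing the required factor $4\cdot 23\cdot 47$.

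No step of this argument presents a real obstacle; the whole proof reduces to elementary modular arithmetic on the intersection numbers $\lambda_i$. The only point demanding care is not to loosen the upper bound $k\le 46$ in (1): once $k=47$ the prime $47$ already known to divide $\lambda_1$ can be absorbed by the denominator $k$ in the expression for $|\mathcal{B}_1|$, and divisibility by $47$ can genuinely fail (the $2$-$(48,47,46)$ design of complements of singletons has only $48$ blocks).
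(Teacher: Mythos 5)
Your proof is correct and follows exactly the route the paper intends: the paper gives no argument of its own, merely citing the integrality of the derived parameters $\lambda_i=\lambda_t\binom{v-i}{t-i}/\binom{k-i}{t-i}$ from Cameron--van Lint, which is precisely what you work out. Your computations check out (including the sharper conclusion $8\cdot 23\cdot 47\mid|\mathcal{B}_2|$ in part (2) and the counterexample at $k=47$ showing the bound in (1) is needed), so nothing further is required.
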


We now present the main result (the details of Theorem \ref{thm:main thorem}). 

\begin{Thm}\label{thm: 1 and 2-design}
Let $C$ be a triply even binary code of length $48$ in Proposition \ref{prop: A 42 and B 4}.
Let $D_{w}$ and $D_{w}^{\perp}$ be the support $t$-designs of weight $w$ of $C$ and $C^{\perp}$.

\begin{enumerate}
\item[$(1)$] For all $w\neq 0$, $D_{w}$ and $D_{w}^{\perp}$ are $1$-designs.
\item[$(2)$] If $C$ is a code in  Proposition \ref{prop: A 42 and B 4} (A) except for $\la 13,[1]\ra$, 
$D_{6}^{\perp}$ (also $D_{42}^{\perp}$) is a $2$-design  but is not a $3$-design. 
For the other cases, { $D_{w}$ and $D_{w}^{\perp}$ are not $2$-designs.}
\end{enumerate}

\end{Thm}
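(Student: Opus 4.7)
For part~(1), the plan is to apply Proposition~\ref{prop: A 42 and B 4} and Theorem~\ref{thm:assmus-mattson}: every listed code satisfies the Assmus--Mattson hypothesis with $t=1$, yielding that each $D_{u}$ in $C$ and each $D_{w}^{\perp}$ in $C^{\perp}$ is a $1$-design for $u,w$ in the corresponding Assmus--Mattson ranges. The only weight possibly uncovered is $w=48$ in $C^{\perp}$, which if present arises from the all-ones vector and gives the trivial $1$-design consisting of the whole point set.

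For the $2$-design half of~(2), I would run the $2$-design test for $D_{6}^{\perp}$ through the harmonic weight enumerator. Since $C$ in case~(A) has weights in $\{0,16,24,32,48\}$ and $\widetilde{f}(\mathbf{0})=\widetilde{f}(\mathbf{1})=0$ for any $f\in\Harm_{k}$ with $k\geq 1$, we obtain
\[
Z_{C,f}(x,y)=\alpha_{1}x^{30}y^{14}+\alpha_{2}x^{22}y^{22}+\alpha_{3}x^{14}y^{30}
\]
for $f\in\Harm_{2}$, and Bachoc's identity (Theorem~\ref{thm: Bachoc iden.}) gives
\[
Z_{C^{\perp},f}(x,y)=\frac{4}{|C|}\bigl[\alpha_{1}(x+y)^{30}(x-y)^{14}+\alpha_{2}(x+y)^{22}(x-y)^{22}+\alpha_{3}(x+y)^{14}(x-y)^{30}\bigr].
\]
A short binomial expansion produces the coincidence that the coefficient of $x^{40}y^{4}$ in each of the three bracketed products equals~$231$; by Theorem~\ref{thm:design} this reduces the $2$-design property of $D_{6}^{\perp}$ to the single condition $W_{C,f}(1,1)=\alpha_{1}+\alpha_{2}+\alpha_{3}=0$ for every $f\in\Harm_{2}$. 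Rewriting $W_{C,f}(1,1)=\sum_{z\in X_{2}}f(z)N_{z}$ with $N_{z}=\#\{c\in C:z\subset\supp(c)\}$, the condition is automatic whenever $N_{z}$ is constant in~$z$, which for a linear code is equivalent to $d^{\perp}\geq3$---and this is the case for every code in~(A), since $d^{\perp}=4$.

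For the remaining ``not'' statements the plan is purely arithmetic via Lemma~\ref{lem: divisible 47}. For ``not a $3$-design'', a $3$-$(48,6,\lambda_{3})$ design has block count divisible by $47\cdot 23\cdot 4$; compute $A_{6}^{\perp}$ via the MacWilliams identity (Theorem~\ref{thm: macwilliams iden.}) and verify non-divisibility for each of the $41$ codes in~(A) other than $\la 13,[1]\ra$. For the ``other cases'' ($\la 13,[1]\ra$ together with all of~(B)), a $2$-$(48,k,\lambda_{2})$ design with $2\leq k\leq46$ has block count divisible by~$47$; computing $A_{w}$ and $A_{w}^{\perp}$ across all nontrivial weights and verifying non-divisibility by~$47$ finishes the argument.

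The main obstacle is the case analysis needed to isolate the exceptional code $\la 13,[1]\ra$: the structural harmonic-enumerator argument applies uniformly across all of case~(A), so the theorem must be singling out $\la 13,[1]\ra$ because $D_{6}^{\perp}$ there is either empty ($A_{6}^{\perp}=0$) or is in fact a $3$-design rather than merely a $2$-design. Pinning down which alternative holds and verifying the corresponding divisibility statements is the delicate step, accomplished using the explicit weight distributions from the database~\cite{BD}.
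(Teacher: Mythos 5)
Your overall strategy coincides with the paper's: part (1) from Proposition \ref{prop: A 42 and B 4} via the Assmus--Mattson theorem, the $2$-design property of $D_{6}^{\perp}$ via harmonic weight enumerators and Theorem \ref{thm: Bachoc iden.}, and all negative statements via Lemma \ref{lem: divisible 47} together with explicit block counts, so the proposal is essentially correct. Two remarks on where you diverge. First, in the harmonic step the paper writes $Z_{C,f_{2}}=a x^{30}y^{14}+b x^{22}y^{22}+a x^{14}y^{30}$ (using that $D_{32}$ is the complement of $D_{16}$), extracts the relation $b'=-2a'$ from the vanishing of the coefficient of $x^{44}$ in $Z_{C^{\perp},f_{2}}$ (no weight-$2$ words in $C^{\perp}$), and then checks directly that the coefficient of $x^{42}y^{6}$ vanishes; your version keeps three independent coefficients, observes that the coefficient of $x^{40}y^{4}$ in each of $(x+y)^{30}(x-y)^{14}$, $(x+y)^{22}(x-y)^{22}$, $(x+y)^{14}(x-y)^{30}$ equals $231$ (this is correct; for the middle term it is just $\binom{22}{2}$), and kills $\alpha_{1}+\alpha_{2}+\alpha_{3}$ via the constancy of $N_{z}$, which follows from $d^{\perp}\geq 3$. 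These are logically the same relation --- $\alpha_{1}+\alpha_{2}+\alpha_{3}=0$ is exactly the paper's $b'=-2a'$ once $\alpha_{1}=\alpha_{3}$ is granted --- and your packaging is arguably cleaner since it avoids invoking the complementation symmetry; both routes need the same binomial computation. Second, a bookkeeping omission: the theorem's ``other cases'' comprise not only $\la 13,[1]\ra$ and the codes in (B), but also, for every code in (A), the primal designs $D_{w}$ at weights $16,24,32$ and the dual designs $D_{w}^{\perp}$ at all weights other than $6$ and $42$; the paper disposes of all of these by the same non-divisibility-by-$47$ check of the block counts, so your mod-$47$ verification must be stated to run over these families as well. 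Your resolution of the exceptional code $\la 13,[1]\ra$ matches the paper's: $A_{6}^{\perp}=0$ there, so $D_{6}^{\perp}$ has no blocks.
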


\begin{proof}
By Proposition \ref{prop: A 42 and B 4}, we have $(1)$.

Let $C$ be a triply even binary code of length $48$ in Proposition \ref{prop: A 42 and B 4} (A).
If $D_{w}$ is the support design for any weight $w$ of $C$, 
we have checked numerically that the number of the blocks of $D_{w}$ is not divisible by $47$. 
Hence, the $D_{w}$ are not $2$-designs by Lemma \ref{lem: divisible 47} (1).

Next, we show that if $C$ is a code according to Proposition \ref{prop: A 42 and B 4} (A) except for $\la 13,[1]\ra$, 
$D_{6}^{\perp}$ (also $D_{42}^{\perp}$) is a $2$-design but is not a $3$-design.
Let $W_{C,f_{i}}(x,y)$ be the harmonic weight enumerator associated with the code $C$ in  Proposition \ref{prop: A 42 and B 4} (A), where $f_i$ denotes a harmonic function of degree $i \geq 1$. 
Since $D_{16}$, $D_{24}$, and $D_{32}$ are $1$-designs, 
for a harmonic function $f_{1}$ of degree $1$, we have
\begin{align*}
W_{C,f_{1}}(x,y) =0 
\end{align*}
by Theorem \ref{thm:design}. 

Let $f_{2}$ be a harmonic function  of degree $2$.
Since $D_{16}$, $D_{24}$, and $D_{32}$ are not $2$-designs and  $D_{32}$ is the complement of $D_{16}$, 
we have 

\begin{align*}
W_{C,f_{2}}(x,y) & =\sum_{{\bf c} \in{C}} \widetilde{f}_{2}(c) x^{48-wt(c)}y^{wt(c)} \\
        & =a x^{32}y^{16}+b x^{24}y^{24}+ax^{16}y^{32} \\
        & =(xy)^{2}(a x^{30}y^{14}+b x^{22}y^{22}+ax^{14}y^{30}) \\
        & =(xy)^{2}Z_{C,f_{2}}(x,y), 
\end{align*}
where $a$ and $b$ are not equal to $0$ by Theorem \ref{thm:design}.

By Theorem \ref{thm: Bachoc iden.}, there exist coefficients $a',b'$ such that
\begin{align*}
& Z_{C^{\perp},f_{2}}(x,y) \\ 
&=(-1)^{2} \frac{2^{24}}{|C|} Z_{C,f_{2}} \left( \frac{x+y}{\sqrt{2}}, \frac{x-y}{\sqrt{2}} \right)\\
& =a' (x+y)^{30}(x-y)^{14}+b' (x+y)^{22}(x-y)^{22}+a'(x+y)^{14}(x-y)^{30}. 
\end{align*}
Since $C^{\perp}$ has minimum weight $4$, the coefficient of $x^{44}$ in $Z_{C^{\perp},f_{2}}$ is equal to $0$. 
Then we have $b'=-2a'$. Hence we have 
\begin{align*}
& W_{C^{\perp},f_{2}}(x,y)\\
&=(xy)^{2} \left( a' (x+y)^{30}(x-y)^{14}-2a' (x+y)^{22}(x-y)^{22}+a'(x+y)^{14}(x-y)^{30} \right).
\end{align*}

By a direct computation, the coefficient of $x^{42}y^{6}$ (also $x^{6}y^{42}$) in $W_{C^{\perp},f_{2}}$ is equal to $0$. 
Hence $D_{6}^{\perp}$ (also $D_{42}^{\perp}$) is a $2$-design.
We have checked numerically that 
the number of  blocks of $D_{6}^{\perp}$ (also $D_{42}^{\perp}$) is not divisible by $47 \cdot 23$. 
Therefore, $D_{6}^{\perp}$ (also $D_{42}^{\perp}$) is not a $3$-design by Lemma \ref{lem: divisible 47} (2). 

In the case $\la13,[1]\ra$, 
this code has the weight enumerator 
\[
x^{48}+759x^{32}y^{16}+6672x^{24}y^{24}+759x^{16}y^{32}+y^{48}.
\]
By Theorem \ref{thm: macwilliams iden.},  
we have $A_{6}^{\perp}=A_{42}^{\perp}=0$, where $A_{i}^{\perp}$ is the numbers  of weight $i$ of the dual code.
Hence there are no blocks of $D_{6}^{\perp}$ and $D_{42}^{\perp}$.


Next, we have checked numerically that 
the number of the blocks of 
$D_{w}^{\perp}$ except for $D_{6}^{\perp}$ and $D_{42}^{\perp}$ 
is not divisible by $47$. 
Therefore, 
$D_{w}^{\perp}$ except for $D_{6}^{\perp}$ and $D_{42}^{\perp}$ are not $2$-designs 
by Lemma \ref{lem: divisible 47} (1).

Let $\widehat{C}$ be a triply even binary code of length $48$ in Proposition \ref{prop: A 42 and B 4} (B).
If $\widehat{D_{w}}$ and $\widehat{D_{w}^{\perp}}$ are the support designs for all weights $w$ of $\widehat{C}$ and $\widehat{C^{\perp}}$, 
we have checked numerically that 
the {numbers} of the blocks of $\widehat{D_{w}}$ and $\widehat{D_{w}^{\perp}}$ {are}  not divisible by $47$. 
Therefore, $\widehat{D_{w}}$ and $\widehat{D_{w}^{\perp}}$ are not $2$-designs 
by Lemma \ref{lem: divisible 47} (1).

The numbers of the blocks are listed on one of the 
author's homepage \cite{miezaki}.
This completes the proof of Theorem \ref{thm: 1 and 2-design}. 
\end{proof}

\begin{Rem}
Three codes, $\la1,[1]\ra$, $\la3,[5]\ra$, and $\la6,[363]\ra$, are not applicable to the Assmus--Mattson theorem, 
but their support designs for all weight are $1$-designs
since these codes are generated by the minimum weight codewords which partition 48 coordinates into equal parts. 
These codes have a transitive automorphism group.
\end{Rem}


\section{$2$-designs from triple even codes of length $48$}

We list $2$-designs obtained from Theorem \ref{thm: 1 and 2-design} in Table \ref{tab: 2-designs}.
{In this section, we give the concluding remarks related to 
$2$-designs of triply even binary codes of length $48$ discussed in Section \ref{sec:main}. }

\begin{Rem}
It is interesting to note that {the dual code of} the first triply even binary code $\la7,[144]\ra$ is called  Miyamoto's moonshine code {\cite{miyamoto}}. 
This triply even binary code has the weight enumerator 
\[x^{48}+3x^{32}y^{16}+120x^{24}y^{24}+3x^{16}y^{32}+y^{48}.
\]
Using Theorem \ref{thm: macwilliams iden.}, we obtained the weight enumerator of the dual code. 
Then we have $A_{6}^{\perp}=189504$.
By Theorem \ref{thm: 1 and 2-design}, $D^{\perp}_{6}$ is a $2$-$(48,6,2520)$ design.

There are five triply even binary codes $[129,130,131,132,133]$ in the case of dimension $8$. 
We have checked by Magma \cite{Magma} that 
these codes give five non-isomorphic $2$-$(48, 6, 1240)$ designs.
Similarly, each triply even binary code in dimensions $9$--$12$ gives a different $2$-design.

In the case of triply even binary code $\la13,[1]\ra$, 
there are no codewords of weight $6$ in the dual code.
\end{Rem}

\begin{Rem}
For the $2$-design $D_6^{\perp}$ obtained from Theorem \ref{thm: 1 and 2-design} in Table \ref{tab: 2-designs}, 
we calculated the automorphism groups of the $2$-designs. 
The {generators and transitivity} of the automorphism groups are 
listed on one of the author's homepage \cite{miezaki}. 
\end{Rem}

\begin{Rem}
We have checked by Magma \cite{Magma} that 
for the $2$-design $D_6^{\perp}$ obtained from Theorem \ref{thm: 1 and 2-design} in Table \ref{tab: 2-designs}, 
the codewords of weight $6$ generate the code $C^\perp$. 

\end{Rem}

\begin{table}[h]
\caption{Support $2$-designs of weight $6$}
\begin{center}
\begin{tabular}{l||l|c} \label{tab: 2-designs}
Dim.~& [Code Id] & $2$-$(v,k,\lambda)$ \\ 
          & Weight distribution $(i, A_{i})$ for $A_{i} \neq 0$ & $\sharp$ of designs \\ \hline \hline 
7  & [144] & $2$-$(48,6,2520)$ \\
   & $(0, 1), (16, 3), (24, 120), (32, 3), (48, 1)$ & 1 \\ \hline

8  & [129,130,131,132,133] & $2$-$(48, 6, 1240)$ \\
   & $(0, 1), (16, 15), (24, 224), (32, 15), (48, 1)$ & 5 \\ \hline

9  & [59,60,61,62,63,64,65,66,67,68,69,1109,1712,1714,1716,1960] & $2$-$(48, 6, 600)$ \\ 
   & $(0, 1), (16, 39), (24, 432), (32, 39), (48, 1)$ & 16 \\ \hline

10 & [16,17,18,19,20,21,22,549,550,554,1001,1245,1246,1247] & $2$-$(48, 6, 280)$ \\ 
   & $(0, 1), (16, 87), (24, 848), (32, 87), (48, 1)$ & 14 \\ \hline

11 & [6,7,154,520] & $2$-$(48, 6, 120)$ \\
   & $(0, 1), (16, 183), (24, 1680), (32, 183), (48, 1)$ & 4 \\ \hline

12 & [3] & $2$-$(48, 6, 40)$ \\
   & $(0, 1), (16, 375), (24, 3344), (32, 375), (48, 1)$ & 1 \\ \hline

13 & [1] & - \\ 
   & $(0, 1), (16, 759), (24, 6672), (32, 759), (48, 1)$ & 0
\end{tabular}
\end{center}
\end{table}

\newpage
\section*{Acknowledgments}
The authors thank Koichi Betsumiya and Akihiro Munemasa for helpful discussions on and computations for this research.
The authors would also like to thank the anonymous
reviewers for their beneficial comments on an earlier version of the manuscript. The first author is supported by JSPS KAKENHI (18K03217). 


\end{document}